\newcommand{\C}{\mathbb{C}}
\newcommand{\ZZ}{\mathbb{Z}}
\newcommand{\QQ}{\mathbb{Q}}
\newcommand{\NN}{\mathbb{N}}
\newcommand{\MM}{\mathcal M}
\newcommand{\wt}{\widetilde}
\newcommand{\ima}{\hbox{Im}}
\newcommand{\rom}{\romannumeral}
\newcommand\undermat[2]{
  \makebox[0pt][l]{$\smash{\underbrace{\phantom{
    \begin{matrix}#2\end{matrix}}}_{\text{$#1$}}}$}#2}
\newtheorem{theorem}{Theorem}[section]
\newtheorem{lemma}[theorem]{Lemma}
\newtheorem{corollary}[theorem]{Corollary}
\newtheorem{proposition}[theorem]{Proposition}
\newtheorem{conjecture}[theorem]{Conjecture}
\newtheorem{remark}[theorem]{Remark}
\newtheorem{definition}[theorem]{Definition}
\newtheorem{convention}{Conventions}
\newtheorem{notation}[theorem]{Notation}
\newtheorem{nonumbering}{Theorem}
\newtheorem{nonumberingt}{Acknowledgements}
\begin{document}
\author[Robert Laterveer]
{Robert Laterveer}

\address{Institut de Recherche Math\'ematique Avanc\'ee,
CNRS -- Universit\'e 
de Strasbourg,\
7 Rue Ren\'e Des\-car\-tes, 67084 Strasbourg CEDEX,
FRANCE.}
\email{robert.laterveer@math.unistra.fr}

\title{Some new examples of smash--nilpotent algebraic cycles}

\begin{abstract} Voevodsky has conjectured that numerical equivalence and smash--equivalence coincide for algebraic cycles on any smooth projective variety. Building on work of Vial and Kahn--Sebastian, we give some new examples of varieties where Voevodsky's conjecture is verified.
\end{abstract}

\keywords{Algebraic cycles, Chow groups, motives, finite--dimensional motives, smash--nilpotence, abelian varieties}

\subjclass{Primary 14C15, 14C25, 14C30. Secondary 14J30, 14J32, 14K99}

\maketitle

\section{Introduction}

Let $X$ be a smooth projective variety over $\C$. There exist numerous adequate equivalence relations (in the sense of \cite{Sam}) on the group of algebraic cycles on $X$, ranging from rational equivalence (the finest) to numerical equivalence (the coarsest). Rational equivalence gives rise to the Chow groups $A^j(X):=CH^j(X)_{\QQ}$  (i.e., codimension $j$ cycles with rational coefficients modulo rational equivalence). The other equivalence relations give rise to subgroups $A^j_\sim$, for example there are subgroups
  \[   A^j_{alg}(X)\subset A^j_\otimes(X)\subset A^j_{hom}(X)\subset A^j_{num}(X)\]
  of cycles algebraically resp. smash--nilpotent resp. homologically resp. numerically trivial.
Here, the first inclusion is a theorem of Voevodsky \cite{Voe} and Voisin \cite{V9}, and the last inclusion is the subject of one of the standard conjectures \cite{K}.
More ambitiously, Voevodsky has conjectured that $A^j_\otimes(X)$ and $A^j_{num}(X)$ should coincide \cite{Voe}.

Not a great deal is known about this conjecture of Voevodsky's; most results focus on $1$--cycles. For instance, Voevodsky's conjecture has been proven for $1$--cycles on varieties rationally dominated by products of curves \cite{Seb}, \cite[Proposition 2]{Seb2} (this is further generalized by \cite[Theorem 3.17]{V3}).

In this note (which is inspired by \cite{Seb}, \cite{Seb2} and particularly \cite{V3}), we aim for results for cycles in other dimensions by restricting attention to very special varieties.
 The main result is as follows:

\begin{nonumbering}[$\approx$ theorem \ref{main}\footnote{(The actual statement of theorem \ref{main} is somewhat more general, but this simplified version suffices for many applications.)}] Let  
 $X$ be a smooth projective variety.
 Assume that $X$ is dominated by a product of curves, and that the even cohomology of $X$ verifies
     \[ H^{2i}(X,\QQ)=\wt{N}^{i-1} H^{2i}(X,\QQ)\ .\]
      Then
    \[ A^j_\otimes(X)=A^j_{num}(X)\ \ \hbox{for\ all\ }j\ .\]
      \end{nonumbering}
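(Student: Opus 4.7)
My approach is to combine finite--dimensionality of the motive (which holds because $X$ is dominated by a product of curves, so $h(X)$ is Kimura finite--dimensional) with a motivic lift of the niveau hypothesis, and to proceed by induction on $\dim X$. By Vial's work, $X$ admits a Chow--K\"unneth decomposition $h(X)=\bigoplus_k h^k(X)$. The crucial refinement is to use the hypothesis $H^{2i}(X,\QQ)=\wt{N}^{i-1}H^{2i}(X,\QQ)$ to realise each even piece $h^{2i}(X)$ as a direct summand of $h(Y_i)(i-1)$ for some smooth projective variety $Y_i$ of dimension $\dim X-i+1$, chosen so that $Y_i$ is itself dominated by a product of curves and satisfies an analogous niveau hypothesis. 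Finite--dimensionality is what allows the purely cohomological niveau statement to be promoted to an honest identity between Chow motives.

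Given such a decomposition, a numerically trivial cycle $z\in A^j_{num}(X)$ splits as $z=\sum_k z_k$ along the Chow--K\"unneth pieces, with each $z_k$ for $k\ne 2j$ automatically numerically trivial. The even components $z_{2i}$ transport to cycles of codimension $j-i+1$ on the lower--dimensional $Y_i$, where they are smash--nilpotent by induction on dimension. The odd components $z_{2i+1}$ come from tensor products of $h^1$'s of the dominating curves, so are supported on motives of abelian varieties and fall under Sebastian's (\cite{Seb2}, \cite[Theorem 3.17]{V3}) and Kahn--Sebastian's smash--nilpotence results. Summing the pieces then shows $z\in A^j_\otimes(X)$. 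The induction bottoms out on $1$--cycles on varieties dominated by products of curves (Sebastian \cite{Seb2}, Vial \cite[Theorem 3.17]{V3}) and on divisors (Matsusaka's equality $A^1_{num}=A^1_{alg}$ combined with the Voevodsky--Voisin inclusion \cite{Voe,V9}).

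\textbf{The main obstacle} is the first step: producing, from the cohomological niveau assumption, a Chow--motivic decomposition whose constituents $Y_i$ simultaneously have strictly smaller dimension, remain dominated by a product of curves, and inherit an analogous niveau hypothesis, so that the induction actually closes. It is precisely here that the finite--dimensionality of $h(X)$ must be used in combination with a careful geometric choice of the $Y_i$ --- most naturally by cutting them out inside the dominating product $\prod_\ell C_\ell$ rather than inside $X$ directly, so that curve--dominance is visibly preserved. Without this compatibility one cannot feed the pieces into Sebastian--Vial; with it, the theorem reduces to an essentially known collection of cases.
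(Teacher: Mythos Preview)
Your proposal contains a genuine gap rooted in a misreading of the niveau filtration. By definition, $\wt{N}^{i-1}H^{2i}(X)$ is generated by images of $H^{2}(Z)$ under correspondences, where $Z$ runs over smooth projective varieties of dimension $2i-2(i-1)=2$; that is, the $Y_i$ you are looking for are \emph{surfaces}, not varieties of dimension $\dim X-i+1$. (The latter is what the coniveau filtration $N^{i-1}$ would give, via support on a codimension $i-1$ subvariety.) Once you see that each even K\"unneth component factors through a surface $S_{2i}$, the ``main obstacle'' you identify disappears: Voevodsky's conjecture is trivially true on surfaces, so $(\Pi_{2i})_\ast$ acts as zero on $Z^r(X):=A^r_{num}(X)/A^r_\otimes(X)$ without any induction, and there is no need to arrange that the $Y_i$ be curve--dominated or satisfy a niveau hypothesis.

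The paper proceeds exactly along these lines and avoids induction entirely. One chooses lifts $\Pi_i\in A^n(X\times X)$ of the K\"unneth components (not required to be idempotent, only to sum to $\Delta_X$), with each $\Pi_{2i}$ taken to be $\Psi_{2i}\circ\Gamma_{2i}$ factoring through a surface. Then $(\sum_{i\ \mathrm{odd}}\Pi_i)_\ast=\mathrm{id}$ on $Z^r(X)$. For the odd part, your instinct is correct but the argument needs to be made precise: one embeds $h(X)$ as a direct summand of $\bigoplus_j h(A_j)(m_j)$ with $A_j$ abelian, observes that in cohomology each $\Pi_{2i+1}$ factors through an odd Chow--K\"unneth projector $\pi^{A_j}_\ell$, and then uses Kimura's nilpotence theorem (not merely finite--dimensionality as a slogan) to upgrade this homological identity to a rational equivalence after composing $N$ times. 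Since $(\pi^{A_j}_\ell)_\ast$ with $\ell$ odd lands in skew cycles, Kahn--Sebastian (theorem \ref{skew}) shows it kills $Z^\ast(A_j)$, and hence $Z^r(X)=0$. Your appeal to Sebastian's $1$--cycle results is not needed here; everything reduces directly to surfaces (even part) and skew cycles on abelian varieties (odd part).
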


Here $\wt{N}^\ast$ denotes Vial's niveau filtration, which is a variant of the coniveau filtration (cf. \cite{V4} and section \ref{niv} below). Conjecturally, the condition $H^{2i}(X,\QQ)=\wt{N}^{i-1}$ is equivalent to having $H^{2i}(X,\C)=F^{i-1} H^{2i}(X,\C)$, where $F^\ast$ is the Hodge filtration.

 Examples of varieties to which theorem \ref{main} applies include the following: Fermat hypersurfaces of odd dimension; products of type $X^2_d\times X^n_{d^\prime}$ with $n$ odd  (where $X^n_d$ denotes a Fermat hypersurface of dimension $n$ and degree $d$). 
  Some more examples where theorem \ref{main} applies are given in corollary \ref{ex}.



\vskip0.6cm

\begin{convention} In this note, the word {\sl variety\/} will refer to a reduced irreducible scheme of finite type over $\C$. A {\sl subvariety\/} is a (possibly reducible) reduced subscheme which is equidimensional. 

{\bf All Chow groups will be with rational coefficients}: we denote by $A_jX$ the Chow group of $j$--dimensional cycles on $X$ with $\QQ$--coefficients; for $X$ smooth of dimension $n$ the notations $A_jX$ and $A^{n-j}X$ will be used interchangeably. 

The notations $A^j_{hom}(X)$, $A^j_{num}(X)$, $A^j_{AJ}(X)$, $A^j_{alg}(X)$ and $A^j_\otimes(X)$ will be used to indicate the subgroups of homologically trivial, resp. numerically trivial, resp. Abel--Jacobi trivial resp. algebraically trivial, resp. smash--nilpotent cycles.
The contravariant category of Chow motives (i.e., pure motives with respect to rational equivalence as in \cite{Sc}, \cite{MNP}) will be denoted $\MM_{\rm rat}$.



We will write $H^j(X)$ 
and $H_j(X)$ 
to indicate singular cohomology $H^j(X,\QQ)$,
resp. Borel--Moore homology $H_j(X,\QQ)$.

\end{convention}

\section{Preliminary}

\subsection{Motives of abelian type}

We refer to \cite{Kim}, \cite{An}, \cite{I}, \cite{J4}, \cite{MNP} for the definition of finite--dimensional motive. 
An essential property of varieties with finite--dimensional motive is embodied by the nilpotence theorem:

\begin{theorem}[Kimura \cite{Kim}]\label{nilp} Let $X$ be a smooth projective variety of dimension $n$ with finite--dimensional motive. Let $\Gamma\in A^n(X\times X)_{}$ be a correspondence which is numerically trivial. Then there is $N\in\NN$ such that
     \[ \Gamma^{\circ N}=0\ \ \ \ \hbox{\rm in}\ \  A^n(X\times X)_{}\ \]
     (here, $\circ$ indicates composition of correspondences).
\end{theorem}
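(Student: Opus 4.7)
The plan is to interpret the numerically trivial correspondence $\Gamma$ as an endomorphism of the Chow motive $h(X)$ and deduce its nilpotence from the formal structure of Kimura finite-dimensionality combined with a categorical Cayley--Hamilton identity. Since $h(X)$ is finite-dimensional, it admits a Kimura decomposition $h(X)=M^+\oplus M^-$ with $M^+$ evenly finite-dimensional ($\Lambda^{a+1}M^+=0$ for some $a$) and $M^-$ oddly finite-dimensional ($\mathrm{Sym}^{b+1}M^-=0$ for some $b$). Composition with the associated Kimura projectors decomposes $\Gamma$ into four components $\Gamma^{\epsilon\epsilon'}\colon M^{\epsilon'}\to M^{\epsilon}$, each of which is again numerically trivial because the numerically trivial correspondences form a two-sided ideal in $\mathrm{End}_{\MM_{\rm rat}}(h(X))$.

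The key input is a Cayley--Hamilton type identity valid in any $\QQ$-linear pseudo-abelian rigid tensor category: for an evenly (resp.\ oddly) finite-dimensional object $M$ with $\Lambda^{a+1}M=0$ (resp.\ $\mathrm{Sym}^{b+1}M=0$) and any endomorphism $f$ of $M$, the iterate $f^{\circ(a+1)}$ (resp.\ $f^{\circ(b+1)}$) can be expressed as a universal polynomial -- with rational coefficients determined by Newton's identities -- in the categorical traces $\mathrm{tr}(f^{\circ j})$ for $j\le a+1$. I would establish this identity as in Kimura \cite{Kim}, by writing down an explicit antisymmetrization (resp.\ symmetrization) of $f^{\otimes(a+1)}$ that necessarily factors through the vanishing object $\Lambda^{a+1}M$ (resp.\ $\mathrm{Sym}^{b+1}M$).

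Granting this identity, the proof concludes quickly. For any numerically trivial $g\in\mathrm{End}(M^{\epsilon})$, every power $g^{\circ k}$ is again numerically trivial, and the categorical trace $\mathrm{tr}(g^{\circ k})=\deg\bigl({}^tg^{\circ k}\cdot\Delta_X\bigr)$ vanishes for all $k\ge 1$, being the intersection of a numerically trivial cycle with an algebraic class on $X\times X$. The Cayley--Hamilton identity then forces $g^{\circ(a+1)}=0$ (or $g^{\circ(b+1)}=0$). Applying this to the diagonal pieces $\Gamma^{\epsilon\epsilon}$ and to the composed pieces $\Gamma^{\epsilon\epsilon'}\circ\Gamma^{\epsilon'\epsilon}\in\mathrm{End}(M^{\epsilon})$, one sees that in the expansion of $\Gamma^{\circ N}$ every non-zero summand is a composition chain of the four $\Gamma^{\epsilon\epsilon'}$ that contains either many consecutive diagonal steps or many alternating pairs, and therefore vanishes once $N$ is sufficiently large.

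The main obstacle is the categorical Cayley--Hamilton identity itself. The classical linear-algebraic proof (via eigenvalues) has no direct analogue in an abstract tensor category, so one has to construct the polynomial identity by hand from the $S_{a+1}$-action on $M^{\otimes(a+1)}$, carefully tracking how antisymmetrizers (resp.\ symmetrizers) interact with $f^{\otimes(a+1)}$ and with the evaluation/coevaluation maps defining the trace. This combinatorial step is precisely the cleverness of \cite{Kim}; the remainder of the argument is essentially formal.
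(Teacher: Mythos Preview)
The paper does not supply a proof of this statement; theorem~\ref{nilp} is quoted from Kimura \cite{Kim} and used as a black box in the proof of theorem~\ref{main}. So there is no ``paper's own proof'' to compare against. Your sketch is essentially the strategy of Kimura's original argument (Kimura decomposition $h(X)=M^+\oplus M^-$, a categorical Cayley--Hamilton identity on each summand, then reassembly), and the first two steps are correctly described.

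The reassembly step in your final paragraph, however, has a real gap. The combinatorial claim that every long chain $\Gamma^{\epsilon_0\epsilon_1}\circ\cdots\circ\Gamma^{\epsilon_{N-1}\epsilon_N}$ must contain either many \emph{consecutive} diagonal steps or many \emph{consecutive} alternating pairs is false: aperiodic binary sequences with bounded run-length and no long alternating segment exist. What one actually obtains, after grouping a chain at its successive visits to $+$, is a product $g_1\circ\cdots\circ g_k$ of potentially \emph{distinct} numerically trivial endomorphisms $g_j\in\mathrm{End}(M^+)$. Your single-variable Cayley--Hamilton gives that each such $g_j$ is nilpotent of index at most $a+1$, but this does not by itself force a product of different $g_j$'s to vanish. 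What is needed is that the ideal of numerically trivial endomorphisms of $M^+$ is \emph{nilpotent}, not merely a nilideal. Over $\QQ$ this does follow from the uniform bound you have, via the Nagata--Higman theorem (an associative $\QQ$-algebra in which every element satisfies $x^{a+1}=0$ is nilpotent as an algebra); alternatively one can appeal directly to the stronger ideal-nilpotence statements proved in \cite{Kim} or in \cite{An}. With that input in hand, your chain argument then gives an explicit $N$ depending only on $a$ and $b$.
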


 Actually, the nilpotence property (for all powers of $X$) could serve as an alternative definition of finite--dimensional motive, as shown by Jannsen \cite[Corollary 3.9]{J4}.

\begin{conjecture}[Kimura \cite{Kim}]\label{findim} Every smooth projective variety has finite--dimensional motive.
\end{conjecture}

We are still far from knowing this, but at least there are quite a few non--trivial examples:
 
\begin{remark}\label{exfindim} 
The following varieties have finite--dimensional motive: abelian varieties, varieties dominated by products of curves \cite{Kim}, $K3$ surfaces with Picard number $19$ or $20$ \cite{P}, surfaces not of general type with vanishing geometric genus \cite[Theorem 2.11]{GP}, Godeaux surfaces \cite{GP}, certain surfaces of general type with $p_g=0$ \cite{PW}, \cite{V8}, Hilbert schemes of surfaces known to have finite--dimensional motive \cite{CM}, generalized Kummer varieties \cite[Remark 2.9(\rom2)]{Xu},
 3--folds with nef tangent bundle \cite{Iy} (an alternative proof is given in \cite[Example 3.16]{V3}), 4--folds with nef tangent bundle \cite{Iy2}, log--homogeneous varieties in the sense of \cite{Br} (this follows from \cite[Theorem 4.4]{Iy2}), certain 3--folds of general type \cite[Section 8]{V5}, varieties of dimension $\le 3$ rationally dominated by products of curves \cite[Example 3.15]{V3}, varieties $X$ with $A^i_{AJ}(X)_{}=0$ for all $i$ \cite[Theorem 4]{V2}, products of varieties with finite--dimensional motive \cite{Kim}.
\end{remark}

\begin{definition}\label{abtype} Let $X$ be a smooth projective variety of dimension $n$. We say that $X$ has {\em motive of abelian type\/} if $h(X)\in\MM_{\rm rat}$ is in the subcategory generated by the motives of curves.

\end{definition}

\begin{remark} It follows from the fact that curves have finite--dimensional motive that ``motive of abelian type'' implies ``finite--dimensional motive''. The converse is probably not true
(many motives are {\em not\/} of abelian type, cf. \cite[7.6]{D}), yet it is a (somewhat embarassing) fact that all known finite--dimensional motives happen to be of abelian type.


\end{remark}

Various characterizations of motives of abelian type are given in \cite{V3}. One of these is as follows:

\begin{proposition}[Vial \cite{V3}]\label{v1} Let $X$ be a smooth projective variety of dimension $n$. The motive of $X$ is of abelian type if and only if $A^j_{alg}(X)$ is generated, via correspondences, by Chow groups of products of curves, for all $j>\lceil {n\over 2}\rceil$.
\end{proposition}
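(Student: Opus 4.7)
The ``only if'' direction is essentially tautological. The plan is to note that if $h(X)$ lies in the thick tensor subcategory generated by motives of curves, then there is a product of curves $Y$ and Tate twists $r_\alpha$ such that $h(X)$ is a direct summand, via correspondences, of $\bigoplus_\alpha h(Y)(r_\alpha)$. Applying Chow groups immediately yields that every $A^j(X)$ --- and in particular every $A^j_{alg}(X)$ --- is generated via correspondences by Chow groups of products of curves.

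For the nontrivial direction, the plan has three main steps. First, I would establish that $h(X)$ itself is finite--dimensional: products of curves have finite--dimensional motive by Kimura, and the hypothesis on the algebraic Chow groups in codimensions $j>\lceil n/2\rceil$ should, together with Poincar\'e duality bookkeeping, force $h(X)$ into the Kimura--finite category. This makes the nilpotence theorem (Theorem~\ref{nilp}) available. Second, I would translate the hypothesis into correspondence--level data: for each $j>\lceil n/2\rceil$ the hypothesis provides a product of curves $C_j$ and a correspondence $\Gamma_j \in A^\ast(C_j\times X)$ whose action surjects onto $A^j_{alg}(X)$. Third, using nilpotence, I would lift these cycle--level surjections to motivic splittings, exhibiting the ``high codimension'' Chow--K\"unneth pieces of $h(X)$ as direct summands of Tate--twisted motives of abelian type.

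The main obstacle is controlling the complementary range $j \le \lceil n/2\rceil$, which the hypothesis does not address directly. My plan is to exploit motivic Poincar\'e duality $h^i(X)^\vee \cong h^{2n-i}(X)(n)$: since the dual of a motive of abelian type is again of abelian type (one has $h(C)^\vee\cong h(C)(1)$ for a curve $C$), the summands produced in codimensions $>\lceil n/2\rceil$ dualize into summands in complementary codimensions. The threshold $\lceil n/2\rceil$ is calibrated precisely so that the hypothesis together with its Poincar\'e dual cover every codimension. The delicate point, where I expect the bulk of the technical work to lie, is assembling the ``top'' and ``bottom'' collections of projectors into a coherent Chow--K\"unneth decomposition whose every summand is of abelian type; here nilpotence is again decisive, allowing orthogonal idempotents to be constructed by correcting na\"ive candidates with nilpotent error terms.
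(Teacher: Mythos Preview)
The paper does not supply an independent argument; it simply invokes \cite[Theorem 5]{V3}. So the comparison is with Vial's proof rather than with anything written out here.

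Your ``if'' direction contains a genuine circularity. You propose to establish finite--dimensionality of $h(X)$ as a preliminary step, so that the nilpotence theorem becomes available, and only then to promote the Chow--level surjections to motivic splittings. But you offer no mechanism for that first step beyond the hedge that the hypothesis ``should\ldots force $h(X)$ into the Kimura--finite category.'' In practice the only known way to prove a motive finite--dimensional is to exhibit it as a direct summand of one already known to be so --- here, a sum of Tate--twisted motives of products of curves. That is precisely the abelian--type conclusion you are trying to reach. Finite--dimensionality is therefore not available as an independent input; it is (equivalent to) the output, and your steps one and three collapse into a single problem you have not solved.

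The route that actually works, and which underlies Vial's argument, bypasses nilpotence at the key stage and instead runs a Bloch--Srinivas decomposition of the diagonal directly in $A^n(X\times X)$. The hypothesis on $A^j_{alg}(X)$ for $j>\lceil n/2\rceil$ --- so in particular on zero--cycles, one--cycles, and so on --- lets one peel off pieces of $\Delta_X$ that factor through products of curves, leaving a remainder supported in successively lower dimension; the transpose symmetry you correctly anticipate then handles the complementary range. This produces the embedding of $h(X)$ into a sum of twisted curve motives in $\MM_{\rm rat}$ with no prior appeal to nilpotence, and finite--dimensionality drops out as a consequence rather than serving as a hypothesis.
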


\begin{proof} This follows from \cite[Theorem 5]{V3}.
\end{proof}

\begin{proposition}[Vial \cite{V3}]\label{v2} Let $X$ be a smooth projective variety of dimension $n$, and assume $X$ has motive of abelian type.  
  Then the motive of $X$ is isomorphic to a direct summand
  \[  h(X)\ \subset\ \bigoplus_j h(A_j)(m_j)\ \ \ \hbox{\rm in}\ \MM_{\rm rat}\ ,\]
  where the $A_j$ are abelian varieties.
  \end{proposition}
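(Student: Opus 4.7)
The plan is to unwind Definition \ref{abtype} and assemble the required embedding via Chow--Künneth decompositions. By definition, $h(X)$ lies in the smallest full subcategory of $\MM_{\rm rat}$ containing the motives of all smooth projective curves and closed under direct sums, direct summands, tensor products, and Tate twists. Equivalently, one can write $h(X)$ as a direct summand of a finite sum $\bigoplus_i h(C_{i,1}\times\cdots\times C_{i,r_i})(m_i)$, for smooth projective curves $C_{i,k}$ and integers $m_i$. It therefore suffices to show that for any product of smooth projective curves $C_1\times\cdots\times C_r$, the motive $h(C_1\times\cdots\times C_r)$ is a direct summand (in $\MM_{\rm rat}$) of $\bigoplus_j h(A_j)(m_j)$ for some abelian varieties $A_j$.

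For any smooth projective curve $C$, choosing a rational point yields the elementary Chow--Künneth decomposition
\[ h(C)=\mathbf{1}\oplus h^1(C)\oplus \mathbf{1}(-1) \]
in $\MM_{\rm rat}$, and the Abel--Jacobi morphism $C\to J(C)$ induces a canonical isomorphism $h^1(C)\cong h^1(J(C))$, the inverse being supplied by the universal property of the Jacobian at the level of correspondences. Applying the Künneth formula and distributing
\[ h(C_1\times\cdots\times C_r)=\bigotimes_{k=1}^r \bigl(\mathbf{1}\oplus h^1(J(C_k))\oplus \mathbf{1}(-1)\bigr), \]
every summand takes the form $\bigl(h^1(J(C_{k_1}))\otimes\cdots\otimes h^1(J(C_{k_s}))\bigr)(-d)$ for some subset $\{k_1,\dots,k_s\}\subseteq\{1,\dots,r\}$ and some $0\le d\le r$.

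Now, by the Chow--Künneth decomposition for abelian varieties (Deninger--Murre), $h^1(A)$ is a direct summand of $h(A)$ in $\MM_{\rm rat}$ for any abelian variety $A$. Hence each factor $h^1(J(C_{k_j}))$ is a direct summand of $h(J(C_{k_j}))$, and their tensor product is a direct summand of
\[ h(J(C_{k_1}))\otimes\cdots\otimes h(J(C_{k_s}))=h\bigl(J(C_{k_1})\times\cdots\times J(C_{k_s})\bigr), \]
which is the motive of an abelian variety. Reassembling the pieces (and absorbing the exponents $-d$ into Tate twists), $h(C_1\times\cdots\times C_r)$ is a direct summand of $\bigoplus_j h(A_j)(m_j)$ for suitable abelian varieties $A_j$; passing to direct sums and summands then yields the proposition. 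The only non-elementary ingredient is the Chow--Künneth decomposition for abelian varieties, which ensures that $h^1(A)\subset h(A)$ is a summand already in $\MM_{\rm rat}$; everything else is routine bookkeeping of the distributed tensor product.
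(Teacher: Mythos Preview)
Your proof is correct, and it takes a genuinely different route from the paper's argument after the common first step.

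Both proofs begin by reducing to the case of a product of curves, i.e.\ realising $h(X)$ as a direct summand of a finite sum $\bigoplus_j h(M_j)(m_j)$ with $M_j=C_{j,1}\times\cdots\times C_{j,r_j}$. (The paper cites \cite[Theorem 4 and Theorem 3.11]{V3} for this; you assert it as a general fact about thick tensor subcategories, which is fine since duals of curve motives are twists of curve motives.) From there the arguments diverge. The paper proceeds geometrically: for $n_k\ge 2g(C_k)$ it uses the maps
\[
  C_1\times\cdots\times C_r\ \to\ C_1^{[n_1]}\times\cdots\times C_r^{[n_r]}\ \to\ J(C_1)\times\cdots\times J(C_r),
\]
invoking \cite{KV} to see that $h(M)$ is a summand of the motive of the product of symmetric products, and the projective bundle formula to decompose the latter as a sum of Tate twists of $h(J(C_1)\times\cdots\times J(C_r))$. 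Your approach is instead purely motivic: you expand $h(C_1\times\cdots\times C_r)$ via the curve Chow--K\"unneth decomposition $h(C)=\mathbf{1}\oplus h^1(C)\oplus\mathbf{1}(-1)$ together with $h^1(C)\cong h^1(J(C))$, and then embed each resulting tensor product of $h^1$'s into the motive of a product of Jacobians using Deninger--Murre.

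What each buys: the paper's argument avoids decomposing anything finer than $h(M)$ and lands directly in twists of a single abelian variety $J(C_1)\times\cdots\times J(C_r)$, at the cost of the auxiliary inputs from \cite{KV} and the projective bundle formula. Your argument is more elementary in its ingredients (only the standard CK decompositions for curves and abelian varieties) and makes transparent exactly which summand of which abelian motive each piece lands in, at the cost of a combinatorial expansion. Either way the conclusion is the same.
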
 

\begin{proof} It suffices to note that for motives of abelian type there is an inclusion
   \[ h(X)\ \subset\ \bigoplus_j h(M_j)(m_j) \ \ \ \hbox{in}\ \MM_{\rm rat}\ ,\]
   where $M_j$ is a product of curves $C_1\times\cdots\times C_{r_j}$ (this follows from \cite[Theorem 4]{V3}, plus \cite[Theorem 3.11]{V3} applied with $l=d:=\dim X$).
   It is well--known this implies proposition \ref{v2}. 
  
  (Indeed, for some $n_i\ge 2g(C_i)$ let $C_i^{[n_i]}$ denote the $n_i$--th symmetric product, and let $J_i$ denote the Jacobian of $C_i$. There exist morphisms
  \[   M:=C_1\times \cdots C_r\ \to\ C_1^{[n_1]}\times\cdots\times C_r^{[n_r]}\ \to\ J_1\times\cdots \times J_r\ .\]
  The first arrow identifies $h(M)$ with a direct summand of $h(C_1^{[n_1]}\times\cdots\times C_r^{[n_r]})$ \cite{KV}. The second arrow is a composition of projective bundles, so the motive $h(C_1^{[n_1]}\times\cdots\times C_r^{[n_r]})$ identifies with a sum of shifted motives of $J_1\times\cdots\times J_r$.)
   \end{proof}

\subsection{Lefschetz standard conjecture}

\begin{notation}
Let $X$ be a smooth projective variety of dimension $n$, and $h\in H^2(X,\QQ)$ the class of an ample line bundle. The hard Lefschetz theorem asserts that the map
  \[  L^{n-i}\colon H^i(X)\to H^{2n-i}(X)\]
  obtained by cupping with $h^{n-i}$ is an isomorphism, for any $i< n$.
  \end{notation}
  
   One of the standard conjectures asserts that the inverse isomorphism is algebraic:

\begin{definition} Given a variety $X$, we say that $B(X)$ holds if for all ample $h$, and all $i<n$ the isomorphism 
  \[  (L^{n-i})^{-1}\colon 
  H^{2n-i}(X)\stackrel{\cong}{\rightarrow} H^i(X)\]
  is induced by a correspondence.
 \end{definition}  
 
 \begin{remark} It is known that $B(X)$ holds for the following varieties: curves, surfaces, abelian varieties \cite{K0}, \cite{K}, threefolds not of general type \cite{Tan}, hyperk\"ahler varieties of 
 $K3^{[n]}$--type \cite{ChM}, $n$--dimensional varieties $X$ which have $A_i(X)_{}$ supported on a subvariety of dimension $i+2$ for all $i\le{n-3\over 2}$ \cite[Theorem 7.1]{V}, $n$--dimensional varieties $X$ which have $H_i(X)=N^{\llcorner {i\over 2}\lrcorner}H_i(X)$ for all $i>n$ \cite[Theorem 4.2]{V2}, products and hyperplane sections of any of these \cite{K0}, \cite{K}.
 \end{remark}
 
 \begin{remark}\label{B}
 Let $X$ be a variety with motive of abelian type. Then $B(X)$ holds. This is because the standard conjecture $B$ can also be formulated for motives. Since $B(A)$ holds for abelian varieties, it also holds for direct summands of a sum of twisted motives of abelian varieties, hence for varieties with motive of abelian type.
 It follows that the standard conjectures $C(X)$ (i.e., algebraicity of the K\"unneth components) and $D(X)$ (i.e., homological and numerical equivalence coincide on $X$ and on 
 $X\times X$) also hold \cite{K0}, \cite{K}. 
 \end{remark}

\subsection{Niveau filtration}\label{niv}

\begin{definition}[Coniveau filtration \cite{BO}]\label{con} Let $X$ be a quasi--projective variety. The {\em coniveau filtration\/} on cohomology and on homology is defined as
  \[\begin{split}   N^c H^i(X,\QQ)&= \sum \ima\bigl( H^i_Y(X,\QQ)\to H^i(X,\QQ)\bigr)\ ;\\
                           N^c H_i(X,\QQ)&=\sum \ima \bigl( H_i(Z,\QQ)\to H_i(X,\QQ)\bigr)\ ,\\
                           \end{split}\]
   where $Y$ runs over codimension $\ge c$ subvarieties of $X$, and $Z$ over dimension $\le i-c$ subvarieties.
 \end{definition}

Vial introduced the following variant of the coniveau filtration:

\begin{definition}[Niveau filtration \cite{V4}] Let $X$ be a smooth projective variety. The {\em niveau filtration} on homology is defined as
  \[ \wt{N}^j H_i(X)=\sum_{\Gamma\in A_{i-j}(Z\times X)_{}} \ima\bigl( H_{i-2j}(Z)\to H_i(X)\bigr)\ ,\]
  where the union runs over all smooth projective varieties $Z$ of dimension $i-2j$, and all correspondences $\Gamma\in A_{i-j}(Z\times X)_{}$.
  The niveau filtration on cohomology is defined as
  \[   \wt{N}^c H^iX:=   \wt{N}^{c-i+n} H_{2n-i}X\ .\]
  
\end{definition}

\begin{remark}\label{is}
The niveau filtration is included in the coniveau filtration:
  \[ \wt{N}^j H^i(X)\subset N^j H^i(X)\ .\] 
  These two filtrations are expected to coincide; indeed, Vial shows this is true if and only if the Lefschetz standard conjecture is true for all varieties \cite[Proposition 1.1]{V4}. 
  
  Using the truth of the Lefschetz standard conjecture in degree $\le 1$, it can be checked \cite[page 415 "Properties"]{V4} that the two filtrations coincide in a certain range:
  \[  \wt{N}^j H^i(X)= N^j H^iX\ \ \ \hbox{for\ all\ }j\ge {i-1\over 2} \ .\]
  \end{remark}

 \begin{lemma}\label{split} Let $X$ be a smooth projective variety of dimension $n$ such that $B(X)$ holds. Suppose 
  \[ H^{2i}(X)= \wt{N}^{ i-1} H^{2i}(X) \]
  for some $i$. Then there exists a smooth projective surface $S_{}$ and correspondences $\Gamma_{2i}\in A^{n+1-i}(X\times S_{})$, $\Psi_{2i}\in A^{i+1}(S_{}\times X)$ such that
   \[ \pi_{2i}=\Psi_{2i}\circ \Gamma_{2i}\ \ \ \hbox{in}\ H^{2n}(X\times X)\ .\]
   \end{lemma}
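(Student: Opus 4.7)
My plan is to produce $\Psi_{2i}$ directly from the niveau hypothesis and then to obtain $\Gamma_{2i}$ as a section of $\Psi_{2i}$ via the semi-simplicity of the category of numerical motives. First, I translate the hypothesis using the identity $\wt{N}^c H^a X=\wt{N}^{c-a+n} H_{2n-a}X$ to rewrite it in homology as $H_{2n-2i}(X)=\wt{N}^{n-i-1}H_{2n-2i}(X)$. Unwinding the definition of the niveau filtration, the auxiliary varieties there have dimension $(2n-2i)-2(n-i-1)=2$, so there exist finitely many smooth projective surfaces $S_1,\dots,S_r$ and correspondences $\Psi_k\in A_{n-i+1}(S_k\times X)=A^{i+1}(S_k\times X)$ whose sum $\sum_k(\Psi_k)_\ast\colon\bigoplus_k H_2(S_k)\to H_{2n-2i}(X)$ is surjective. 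Taking $S:=\bigsqcup_k S_k$ (a smooth projective scheme of pure dimension $2$) and $\Psi_0:=\sum_k\Psi_k\in A^{i+1}(S\times X)$, Poincar\'e duality yields the surjection $(\Psi_0)_\ast\colon H^2(S)\twoheadrightarrow H^{2i}(X)$.

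Next, I invoke standard conjectures. The hypothesis $B(X)$, combined with $B(S)$ (trivial for surfaces), implies $B(X\times S)$ by Kleiman's product theorem, hence $C(X\times S)$ and $D(X\times S)$ also hold. In particular, the K\"unneth components $\pi^X_{2i}\in A^n(X\times X)$ and $\pi^S_2\in A^2(S\times S)$ are algebraic modulo homological equivalence, and homological equivalence coincides with numerical equivalence on cycles in $X\times S$. Replacing $\Psi_0$ by $\Psi_{2i}:=\pi^X_{2i}\circ\Psi_0\circ\pi^S_2\in A^{i+1}(S\times X)$ (the composition remains of codimension $i+1$), I may view $\Psi_{2i}$ as a morphism $\tilde\Psi\colon h^2(S)(1-i)\to h^{2i}(X)$ in $\MM_{\rm hom}$ that is surjective on cohomology.

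Then, by Jannsen's theorem \cite{J4}, $\MM_{\rm num}$ is abelian semi-simple, so $\tilde\Psi$, regarded as a morphism in $\MM_{\rm num}$, admits a section $\tilde\Gamma$. Using $D(X\times S)$, morphisms in $\MM_{\rm num}$ and in $\MM_{\rm hom}$ between the motives of $X$ and $S$ coincide, so this section arises from a homological class of some algebraic cycle. Picking any representative $\Gamma_{2i}\in A^{n+1-i}(X\times S)$, the identity $\tilde\Psi\circ\tilde\Gamma=\mathrm{id}_{h^{2i}(X)}=\pi^X_{2i}$ in $\MM_{\rm hom}$ translates to $\Psi_{2i}\circ\Gamma_{2i}=\pi_{2i}$ in $H^{2n}(X\times X)$, as required.

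The main obstacle is verifying that the section provided by semi-simplicity really lives in $A^{n+1-i}(X\times S)$. This works because $h^2(S)$ has weight $2$ and $h^{2i}(X)$ has weight $2i$, forcing the Tate twist $(1-i)$ in the factorization; $\mathrm{Hom}_{\MM_{\rm hom}}(h(X),h(S)(1-i))$ is then exactly $A^{n+1-i}(X\times S)_{\rm hom}$ by the definition of morphisms in the motivic category, and any hom class in this group lifts to a rational-equivalence class, giving the desired $\Gamma_{2i}$.
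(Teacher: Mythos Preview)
Your proof is correct and follows essentially the same route as the paper: extract $\Psi_{2i}$ from the niveau hypothesis so that $(\Psi_{2i})_\ast\colon H^2(S)\to H^{2i}(X)$ is surjective, and then split this morphism using semi-simplicity to obtain $\Gamma_{2i}$. The only cosmetic difference is that the paper phrases the splitting step by working in the semi-simple subcategory $\MM_{\rm hom}^\circ\subset\MM_{\rm hom}$ generated by motives of varieties satisfying $B$, whereas you pass to $\MM_{\rm num}$ via Jannsen's theorem and then invoke $D(X\times S)$ to come back; these are two ways of saying the same thing (and note that Jannsen's semi-simplicity theorem is his 1992 \emph{Invent.\ Math.}\ paper, not \cite{J4}).
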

   
   \begin{proof} This follows readily from the arguments contained in \cite{V4}. Indeed, by assumption there exists a surface $S_{}$ and a correspondence $\Psi_{2i}\in A^{i+1}(S_{}\times X)$
   such that
     \[ H^{2i}(X)=(\Psi_{2i})_\ast H^2(S_{})\ .\]
     This means that the homomorphism of motives
     \[  \Psi_{2i}\colon\ \ (S_{},\pi_2,0)\ \to\  (X,\pi_{2i},0)\ \ \ \hbox{in}\ \MM_{\rm hom} \]
     is surjective (i.e., 
     \[ (\Psi_{2i}\times \Delta_M)_\ast\colon H^\ast(S_{}\times M)\ \to\  (\pi_{2i}\times\Delta_M)_\ast H^\ast(X\times M)\]
      is surjective for all smooth projective varieties $M$). On the other hand, the motives $(S_{},\pi_2,0)$ and $ (X,\pi_{2i},0)$ lie in a subcategory $\MM_{\rm hom}^\circ\subset \MM_{\rm hom}$ which is semi--simple (one can define $\MM_{\rm hom}^\circ$ as the smallest full subcategory containing the motives of all varieties $M$ for which $B(M)$ is known). As such, there is a left--inverse to $\Psi_{2i}$; this gives the correspondence $\Gamma_{2i}$ with the property that $\Psi_{2i}\circ \Gamma_{2i}=\pi_{2i}$.
        \end{proof}

\subsection{Smash--nilpotence}

\begin{definition} Let $X$ be a smooth projective variety. A cycle $a\in A^r(X)$ is called {\em smash--nilpotent\/} 
if there exists $m\in\NN$ such that
  \[ \begin{array}[c]{ccc}  a^m:= &\undermat{(m\hbox{ times})}{a\times\cdots\times a}&=0\ \ \hbox{in}\  A^{mr}(X\times\cdots\times X)_{}\ .
  \end{array}\]
  \vskip0.6cm

We will write $A^r_\otimes(X)\subset A^r(X)$ for the subgroup of smash--nilpotent cycles.
\end{definition}

\begin{conjecture}[Voevodsky \cite{Voe}]\label{voe} Let $X$ be a smooth projective variety. Then
  \[  A^r_{num}(X)\ \subset\ A^r_\otimes(X)\ \ \ \hbox{for\ all\ }r\ .\]
  \end{conjecture}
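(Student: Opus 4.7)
The final statement as written is Voevodsky's conjecture in full generality, which is presently wide open; my plan is therefore to describe the natural motivic strategy any attempted proof must rely on, and to pinpoint where it inevitably stalls.

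The starting point would be Kimura's nilpotence theorem (theorem \ref{nilp}): if $X$ has finite--dimensional motive, then any numerically trivial self--correspondence $\Gamma\in A^n(X\times X)$ is nilpotent for composition. So the first step is to assume, via conjecture \ref{findim}, that $X$ itself has finite--dimensional motive. This is already a conjectural input, but it is the only credible entry point, and motivates confining attention to the varieties listed in remark \ref{exfindim}.

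Granting finite--dimensionality, the plan is to bootstrap from $\circ$--nilpotence to $\otimes$--nilpotence. Given $a\in A^r_{num}(X)$, one would try to produce a self--correspondence $\Gamma_a\in A^n(X\times X)$ built functorially from $a$ --- for instance $\Gamma_a=a\times b$ for a suitable complementary $b\in A^{n-r}(X)$, or a polarised variant using an ample class together with $B(X)$ --- whose numerical triviality is inherited from that of $a$. Kimura's theorem then yields $\Gamma_a^{\circ N}=0$, and the aim is to extract $a^{\times m}=0$ from this.

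The hard part, and exactly why the conjecture remains open, is this last step: composition $\circ$ and external product $\times$ are genuinely distinct tensor operations, and without additional structure nilpotence for one does not transfer to the other. The two settings where the transfer has been made rigorous are (i) when $X$ is dominated by a product of curves, where $1$--cycles on Jacobians enjoy a Pontryagin product through which $\otimes$--nilpotence can be directly accessed (Kahn--Sebastian, Sebastian, Vial); and (ii) when $h(X)$ is of abelian type, where proposition \ref{v2} transports the problem to abelian varieties. The strategy of the paper's main theorem combines (i) with the niveau hypothesis $H^{2i}(X,\QQ)=\wt N^{i-1} H^{2i}(X,\QQ)$, which via lemma \ref{split} factors the relevant Chow--K\"unneth projectors through surfaces and thereby confines cycles of arbitrary codimension into a situation amenable to the $1$--cycle technology. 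For a general smooth projective $X$ with no such geometric handle, there is at present no known mechanism to cross from $\circ$--nilpotence to $\otimes$--nilpotence, and this is the essential obstruction.
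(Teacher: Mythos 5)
The statement you were asked about is stated in the paper as Conjecture \ref{voe} --- Voevodsky's conjecture --- with no proof offered, since it is open in general; you correctly recognized this rather than manufacturing a spurious argument, and your sketch of the obstruction (passing from $\circ$--nilpotence to $\otimes$--nilpotence) and of how Theorem \ref{main} circumvents it in special cases (abelian--type motive, niveau hypothesis, Lemma \ref{split} factoring the even K\"unneth projectors through surfaces, and the Kahn--Sebastian skew--cycle result on abelian varieties for the odd part) matches the paper's actual strategy. No gap to report: there is nothing to prove here, and your assessment of the state of the art is accurate.
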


\begin{remark} It is known \cite[Th\'eor\`eme 3.33]{An} that conjecture \ref{voe} implies (and is strictly stronger than) conjecture \ref{findim}.
\end{remark}

The most general result concerning smash--nilpotence is the following:

\begin{theorem}[Voevodsky \cite{Voe}, Voisin \cite{V9}]\label{VV} Let $X$ be a smooth projective variety. Then
  \[ A^r_{alg}(X)\ \subset\ A^r_\otimes(X)\ \ \ \hbox{for\ all\ }r\ .\]
  \end{theorem}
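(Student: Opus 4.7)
\emph{Reduction to a degree--zero divisor on a curve.} Let $a\in A^r_{alg}(X)$. By the very definition of algebraic equivalence with $\QQ$--coefficients, $a$ admits an expression $a=\Gamma_\ast(p-q)$, where $C$ is a smooth projective curve, $p,q\in C$ are two closed points, and $\Gamma\in A^r(C\times X)$ is a correspondence. Since the exterior product is natural with respect to correspondences,
\[
  (\Gamma_\ast\beta)^{\times m} \ =\ (\Gamma^{\times m})_\ast\bigl(\beta^{\times m}\bigr)\ \in A^{rm}(X^m)\ ,
\]
smash--nilpotence of $\beta$ implies smash--nilpotence of $\Gamma_\ast\beta$. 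Thus the problem reduces to showing that the degree--zero divisor $\delta:=p-q\in A^1(C)$ is smash--nilpotent.

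\emph{Motivic argument on the curve.} Fix a base point $e\in C$ to obtain the Chow--K\"unneth decomposition $h(C)=\mathbf{1}\oplus h^1(C)\oplus\mathbf{1}(-1)$ in $\MM_{\rm rat}$. A class in $A^1_{hom}(C)$ is the same as a morphism $\mathbf{1}\to h^1(C)(1)$ in $\MM_{\rm rat}$, so $\delta$ gives rise to a morphism of Chow motives $\delta\colon \mathbf{1}\to h^1(C)(1)$. The external power $\delta^{\times N}\in A^N(C^N)$ is then represented by the tensor power
\[
  \delta^{\otimes N}\colon\ \mathbf{1}\ \longrightarrow\ h^1(C)^{\otimes N}(N)\ \ \ \hbox{in}\ \MM_{\rm rat}\ .
\]
This morphism is tautologically $S_N$--equivariant (the source is the trivial representation), hence factors through the symmetric power $\operatorname{Sym}^N h^1(C)$. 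By Kimura's theorem \cite{Kim}, the motive $h^1(C)$ is odd finite--dimensional of dimension $2g(C)$, i.e.\ $\operatorname{Sym}^N h^1(C)=0$ whenever $N>2g(C)$. Hence $\delta^{\times N}=0$ for such $N$, so $\delta$ is smash--nilpotent, and combining with the reduction step $a$ is smash--nilpotent as well.

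\emph{Main obstacle.} The one non--formal ingredient is Kimura's vanishing $\operatorname{Sym}^{2g+1}h^1(C)=0$. Its proof is not routine: it relies on identifying the symmetric projector on $h^1(C)^{\otimes N}$ with a piece of the motive of the symmetric product $C^{(N)}$, and exploiting the fact that for $N$ sufficiently large the Abel--Jacobi morphism $C^{(N)}\to J(C)$ is a projective bundle, which forces the relevant motivic piece to vanish. Voisin's alternative proof \cite{V9} avoids finite--dimensionality altogether and produces $N$ directly, by working with algebraic families of cycles and a Pontryagin--product computation on $J(C)$; either route accomplishes the same goal, and the rest of the theorem is formal.
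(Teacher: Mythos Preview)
The paper does not supply a proof of this theorem: it is recorded as a known result due to Voevodsky \cite{Voe} and Voisin \cite{V9}, with no argument given in the text. So there is no ``paper's own proof'' against which to measure your attempt.

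That said, your argument is essentially correct and follows the now--standard Kimura route. One small imprecision: an arbitrary $a\in A^r_{alg}(X)$ is in general only a finite $\QQ$--linear combination of classes of the form $\Gamma_\ast(p-q)$, not a single such expression. This does no harm, since smash--nilpotent cycles form a subgroup (indeed an ideal for the external product), but you should say so explicitly. The remainder --- the compatibility $(\Gamma_\ast\beta)^{\times m}=(\Gamma^{\times m})_\ast(\beta^{\times m})$, the identification of $\delta$ with a morphism $\mathbf{1}\to h^1(C)(1)$, the factorisation of $\delta^{\otimes N}$ through $\operatorname{Sym}^N h^1(C)$ via $S_N$--equivariance, and Kimura's vanishing $\operatorname{Sym}^{2g+1}h^1(C)=0$ --- is correct.

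For historical context, the original proofs in \cite{Voe} and \cite{V9} predate the language of finite--dimensional motives and proceed more directly, as you acknowledge in your final paragraph. Your version is a clean repackaging in Kimura's later formalism; it is not the original argument, but it is a valid and commonly presented one.
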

  
  In particular, it follows from theorem \ref{VV} that conjecture \ref{voe} is true for $r=1$ and for $r=\dim X$. Another useful result is the following (this is \cite[Proposition 1]{KS}, which builds on results of Kimura's \cite{Kim}):

\begin{theorem}[Kahn--Sebastian \cite{KS}]\label{skew} Let $A$ be an abelian variety. Assume $a\in A^r(A)$ is {\em skew\/}, i.e. $(-1)^\ast(a)=-a$ in $A^r(A)$. Then $a\in A^r_\otimes(A)$.
\end{theorem}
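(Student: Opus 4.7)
The plan is to combine the Chow motivic decomposition of an abelian variety (Deninger--Murre, Shermenev) with Kimura's smash--nilpotence theorem for oddly finite--dimensional motives.

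Set $g=\dim A$. By Deninger--Murre, the Chow motive of $A$ splits as $h(A)=\bigoplus_{i=0}^{2g} h^i(A)$ in $\MM_{\rm rat}$, with $[k]^\ast$ acting on $h^i(A)$ by $k^i$; accordingly, Beauville's Fourier decomposition on Chow groups reads $A^r(A)=\bigoplus_s A^r_{(s)}(A)$, and each piece is canonically identified via Yoneda with $\text{Hom}_{\MM_{\rm rat}}(\mathbf{1}, h^s(A)(r))$. Since $[-1]^\ast$ acts on $A^r_{(s)}(A)$ by $(-1)^s$, the hypothesis $(-1)^\ast a=-a$ forces $a\in\bigoplus_{s\,\text{odd}}A^r_{(s)}(A)$, so $a$ corresponds to a morphism of motives
\[
\tilde a\colon\ \mathbf{1}\ \longrightarrow\ M(r)\ ,\qquad M:=\bigoplus_{j\ge 0} h^{2j+1}(A)\ \subset\ h(A)\ .
\]

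The next key observation is that $M(r)$ is oddly finite--dimensional in Kimura's sense. Indeed, Kimura proved that $h^1(A)$ is oddly finite--dimensional, and the identification $h^i(A)\cong \text{Sym}^i h^1(A)$ (super--symmetric power) implies that each odd--weight summand $h^{2j+1}(A)$ is oddly finite--dimensional as well. A finite direct sum of oddly finite--dimensional motives is again oddly finite--dimensional, via the binomial formula for $\text{Sym}^n$ of a sum, and tensoring with the invertible and evenly finite--dimensional motive $\mathbf{1}(r)$ preserves the property. Hence there exists $n\in\NN$ such that $\text{Sym}^n\bigl(M(r)\bigr)=0$.

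To conclude, note that the $n$--fold tensor power $\tilde a^{\otimes n}\colon \mathbf{1}=\mathbf{1}^{\otimes n}\to M(r)^{\otimes n}$ is invariant under the symmetric group $S_n$ (since $\mathbf{1}$ carries the trivial action), so it factors through the direct summand $\text{Sym}^n\bigl(M(r)\bigr)=0$. Unraveling Yoneda, this says $a\times\cdots\times a=0$ in $A^{nr}(A^n)$, i.e. $a\in A^r_\otimes(A)$. The main obstacle in executing this plan is the parity bookkeeping of the first step: one must align the eigenvalue conventions of Beauville's decomposition with the weight grading of Deninger--Murre so that the skew condition $(-1)^\ast a=-a$ genuinely corresponds to $\tilde a$ landing in the odd--weight submotive of $h(A)(r)$. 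Once this translation is in place, Kimura's oddly finite--dimensional nilpotence machinery delivers the result.
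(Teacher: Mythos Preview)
Your argument is correct and is precisely the Kahn--Sebastian proof: the paper does not supply its own proof of this theorem but merely cites \cite[Proposition 1]{KS} (noting that it ``builds on results of Kimura's \cite{Kim}''), and your write--up reproduces exactly that argument---Beauville/Deninger--Murre decomposition to place a skew cycle in the odd part $h^-(A)(r)$, followed by Kimura's vanishing of $\text{Sym}^n$ for oddly finite--dimensional motives. The indexing slip $A^r_{(s)}(A)\cong \text{Hom}(\mathbf 1,h^s(A)(r))$ (it should be $h^{2r-s}$) is harmless for the parity conclusion, as you yourself flag at the end.
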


\section{Main result}

This section contains the proof of our main result (stated in somewhat more general form than in the introduction):

\begin{theorem}\label{main} Let $X$ be a smooth projective variety of dimension $n$. Assume


\noindent
{(\rom1)} $X$ has motive of abelian type;


\noindent
{(\rom2)} $H^{2i}(X)=\wt{N}^{i-1} H^{2i}(X)$ for all $i\le n/2$.

Then Voevodsky's conjecture is true for $X$, i.e.
  \[ A^r_\otimes(X)=A^r_{num}(X)\ \ \hbox{for\ all\ }r\ .\]
  \end{theorem}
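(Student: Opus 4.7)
\emph{Plan.} The inclusion $A^r_\otimes(X) \subset A^r_{num}(X)$ is automatic, so I focus on the reverse. Since $X$ has motive of abelian type, Remark \ref{B} supplies $B(X)$, $C(X)$, $D(X)$, and the same holds for $X \times X$ (which also has motive of abelian type). Combining Proposition \ref{v2} with the Chow--Künneth decomposition for abelian varieties (Deninger--Murre), I obtain a Chow--Künneth decomposition $\{\pi_j\} \subset A^n(X \times X)$ of $X$. Given $a \in A^r_{num}(X) = A^r_{hom}(X)$, I decompose $a = \sum_j (\pi_j)_* a$; since correspondences preserve both numerical triviality and smash--nilpotence, the task reduces to showing each $(\pi_j)_* a$ is smash--nilpotent. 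The boundary cases $j \in \{0, 2n\}$ are immediate from the standard formulas for $\pi_0$ and $\pi_{2n}$, both of which annihilate numerically trivial cycles.

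For $j$ odd, Proposition \ref{v2} realizes $h(X)$ as a direct summand of $\bigoplus_k h(A_k)(m_k)$; via the corresponding inclusion--and--retraction correspondences, $(\pi_j)_* a$ is the image of a cycle lying in the odd Chow--Künneth parts of the $A_k$, on which $[-1]^*$ acts as $-1$. Such a skew cycle is smash--nilpotent by Theorem \ref{skew}; since smash--nilpotence is preserved by the retraction correspondence, $(\pi_j)_* a$ is smash--nilpotent as well.

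For $j = 2i$ with $1 \le i \le n-1$, I first promote hypothesis (ii) to all such $i$: for $i > n/2$, the hard Lefschetz isomorphism $L^{2i-n}$ (algebraic by $B(X)$) sends $\wt{N}^{n-i-1} H^{2(n-i)}(X)$ onto $\wt{N}^{i-1} H^{2i}(X)$. Lemma \ref{split} then supplies a surface $S_i$ and correspondences $\Psi_{2i}, \Gamma_{2i}$ with $\Psi_{2i} \circ \Gamma_{2i} = \pi_{2i}$ in $H^{2n}(X \times X)$. The error $\eta := \pi_{2i} - \Psi_{2i} \circ \Gamma_{2i} \in A^n(X \times X)$ is homologically and hence (by $D(X \times X)$) numerically trivial, so Theorem \ref{nilp} produces $N$ with $\eta^{\circ N} = 0$. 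Writing $a_{2i} := (\pi_{2i})_* a$ and $b := (\Gamma_{2i})_* a_{2i} \in A^*(S_i)$, the idempotence of $\pi_{2i}$ yields $a_{2i} = (\Psi_{2i})_* b + \eta_* a_{2i}$; iterating $N$ times,
\[ a_{2i} = \sum_{k=0}^{N-1} (\eta^{\circ k} \circ \Psi_{2i})_* b. \]
The cycle $b$ is numerically trivial on the surface $S_i$, where Voevodsky's conjecture holds in every codimension: divisors satisfy $A^1_{num} = A^1_{alg}$, $0$--cycles satisfy $A^2_{num} = A^2_{alg}$, both reducing to Theorem \ref{VV}. Hence $b$, and therefore each summand on the right, is smash--nilpotent.

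The substantive step is the even case: Lemma \ref{split} yields the factorization of $\pi_{2i}$ only modulo homological equivalence, and the heart of the argument is the Kimura--nilpotence bootstrapping of $\eta$ to the finite sum above, combined with the fact that smash--nilpotence passes through the action of every self--correspondence $\eta^{\circ k}$.
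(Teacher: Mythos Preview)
Your overall architecture is the same as the paper's: decompose the diagonal into K\"unneth pieces, kill the even pieces by factoring through a surface, and kill the odd pieces via Kahn--Sebastian's result on skew cycles. The paper, however, distributes the use of Kimura nilpotence differently: it \emph{defines} the even lifts as $\Pi_{2i}:=\Psi_{2i}\circ\Gamma_{2i}$ in $A^n(X\times X)$ (so no nilpotence is needed to make them factor through a surface), and then invokes nilpotence for the \emph{odd} part, to pass from the homological identity $\Pi_{2i+1}=\sum_j \Gamma_2^j\circ\pi^{A_j}_{\rm odd}\circ\Gamma_1^j\circ\Pi_{2i+1}$ to a statement about the action on $Z^r(X)$. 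You do the mirror image: nilpotence for the even case, but no nilpotence for the odd case. That asymmetry is exactly where your gap sits.

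Concretely, your sentence ``via the corresponding inclusion--and--retraction correspondences, $(\pi_j)_\ast a$ is the image of a cycle lying in the odd Chow--K\"unneth parts of the $A_k$'' is not justified. From Proposition~\ref{v2} you only have $\Gamma_2\circ\Gamma_1=\Delta_X$ in $A^n(X\times X)$; there is no reason the cycle $c:=(\Gamma_1\circ\pi_j)_\ast a\in \bigoplus_k A^\ast(A_k)$ lies in the odd Deninger--Murre pieces. What \emph{is} true is that $\pi_j-\sum_k\Gamma_2^k\circ\pi^{A_k}_{\rm odd}\circ\Gamma_1^k\circ\pi_j$ is homologically trivial, and you must then run the same nilpotence/iteration argument you used for $\eta$ in the even case to conclude that $(\pi_j)_\ast a$ is a finite sum of pushforwards of skew cycles. (Equivalently: one can show $h^j(X)$ is a direct summand of $\bigoplus_k h^{j+2c_k}(A_k)$ in $\MM_{\rm rat}$, but this lifting from $\MM_{\rm hom}$ again requires Theorem~\ref{nilp}, not merely ``combining Proposition~\ref{v2} with Deninger--Murre''.) Once you insert this missing nilpotence step, your proof goes through and is essentially a reorganisation of the paper's.
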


\begin{proof} 
Let us denote
  \[ Z^r(X):= { A^r_{num}(X)\over A^r_\otimes(X) }\ .\]

By assumption (\rom1), the K\"unneth components $\pi_i$ of $X$ are algebraic (remark \ref{B}). 
By assumption (\rom2) and lemma \ref{split}, any ``even'' K\"unneth component $\pi_{2i}$ with $i\le n/2$
factors over a surface, i.e. there exists a surface $S_{2i}$ and correspondences $\Gamma_{2i}\in A^{n+1-i}(X\times S_{2i})$, $\Psi_{2i}\in A^{i+1}(S_{2i}\times X)$ such that
   \[ \pi_{2i}=\Psi_{2i}\circ \Gamma_{2i}\ \ \ \hbox{in}\ H^{2n}(X\times X)\ .\]
  
  We now lift the $\pi_i$ to the level of rational equivalence in the following way: for the even components we choose 
    \[    \Pi_{2i}:=\begin{cases} \ \ \  \Psi_{2i}\circ \Gamma_{2i}  \ \ \ \ \ \ \ \ \ \hbox{in}\ A^{n}(X\times X) &\hbox{if\ }i\le n/2\ ;\\
                                                   {}^t  (\Psi_{2n-2i}\circ \Gamma_{2n-2i})  \ \ \ \hbox{in}\ A^{n}(X\times X)&\hbox{if}\ i> n/2\ , \\
                                                   \end{cases}\]
    Here $\Psi_{2i}, \Gamma_{2i}$ are correspondences to and from a surface $S_{2i}$ as above, and ${}^t()$ denotes the transpose of a correspondence.
   For the odd K\"unneth components $\pi_{2i+1}$, we take arbitrary lifts $\Pi_{2i+1}\in A^n(X\times X)$ of the $\pi_{2i+1}$, subject only to the condition that
   \[  \Delta_X={\displaystyle \sum_{i=0}^{2n}} \Pi_i\ \ \hbox{in}\ A^n(X\times X) \]
   (i.e., we define the last $\Pi_{2i+1}$ as a difference of cycle classes). Note that our $\Pi_i\in A^n(X\times X)$ need {\em not\/} be idempotents.
  
  We now remark that
    \[ (\Pi_{2i})_\ast\colon\ \ Z^r(X)\ \to\ Z^r(X) \]
    factors over $Z^\ast(S_{2i})$, which is $0$ since $S_{2i}$ is a surface, and so
    \[  (\Pi_{2i})_\ast=0\colon\ \ Z^r(X)\ \to\ Z^r(X)\ \ \ \hbox{for\ all\ }i\ \hbox{and\ all\ }r\ .\]
   It follows that
    \[ (\Delta_X)_\ast=({\displaystyle \sum_{i\ {\rm odd}}} \Pi_i)_\ast\colon\ \ Z^r(X)\ \to\ Z^r(X)\ .\]
    For later use, let us note that this last equality also implies 
    \begin{equation}\label{power}
    ({\displaystyle \sum_{i\ {\rm odd}}} \Pi_i)_\ast=\Bigl(({\displaystyle \sum_{i\ {\rm odd}}} \Pi_i)^{\circ m}\Bigr){}_\ast=\hbox{id}\colon\ \ Z^r(X)\ \to\ Z^r(X)\ ,\ \ \hbox{for\ all\ }m\in \NN\ .
    \end{equation}

Assumption (\rom1) implies the motive of $X$ identifies with a direct summand
  \[ h(X)\ \subset\ \bigoplus_{j=1}^s h(A_j)(m_j)\ \ \ \hbox{in}\ \ \MM_{\rm rat}\ ,\]
  where the $A_j$ are abelian varieties (proposition \ref{v2}). This formally implies that there exist correspondences
    \[  \begin{split} \Gamma_1&={\displaystyle\sum_j} \Gamma_1^j\ \ \in \bigoplus_j A^\ast(X\times A_j)\ ,\\
                            \Gamma_2&={\displaystyle\sum_j} \Gamma_2^j\ \ \in \bigoplus_j A^\ast(A_j\times X)\\
                            \end{split}\]
                     such that
            \[ \Gamma_2\circ \Gamma_1={\displaystyle\sum_j} \Gamma_2^j\circ \Gamma_1^j=\Delta_X\ \ \hbox{in}\ A^n(X\times X)\ .\]
       In particular, for any $i$ we also have that the composition
       \[  H^{2i+1}(X)\ \xrightarrow{(\Gamma_1)_\ast}\ \bigoplus_j H^{2i+1+2c_j}(A_j)\ \xrightarrow{(\Gamma_2)_\ast}\ H^{2i+1}(X)\]
       is equal to the identity (here $c_j$ is some integer, dependent on $n$ and $\dim A_j$ and $m_j$).     
       But this composition is the same as
       \[    \begin{split}  H^{2i+1}(X)\ \xrightarrow{(\Gamma_1)_\ast}\ \bigoplus_j H^{2i+1+2c_j}(A_j)&\ 
       \xrightarrow{\bigl((\pi_{2i+1+2c_1}^{A_1})_\ast,\ldots, (\pi_{2i+1+2c_s}^{A_s})_\ast\bigr)}\ \\
           &\bigoplus_j H^{2i+1+2c_j}(A_j)\ \xrightarrow{(\Gamma_2)_\ast}\ H^{2i+1}(X)\ ,\\
           \end{split}\]
        where the $\pi^{A_j}_i$ denote the Chow--K\"unneth decomposition of \cite{Mur} for the abelian variety $A_j$.  
           
  That is, we have a homological equivalence
    \[  \Pi_{2i+1}=\Gamma_2\circ \Gamma_1\circ \Pi_{2i+1}=
    {\displaystyle\sum_j}\ \Gamma_2^j\circ \pi^A_{2i+1+2c_j}\circ \Gamma_1^j\circ \Pi_{2i+1}\ \ \hbox{in}\ H^{2n}(X\times X)\ .\]
   Taking the sum over all odd K\"unneth components, we find that
     \[ {\displaystyle  \sum_{i\ {\rm odd}}} \Pi_{i}     - {\displaystyle  \sum_{i\ {\rm odd}}}  {\displaystyle \sum_j} \Gamma_2^j\circ \pi^{A_j}_{2i+1+2c_j}\circ \Gamma_1^j\circ \Pi_{2i+1}\ \ \in\  A^n_{}(X\times X)\ \]
     is homologically trivial. But then (since $X$ has finite--dimensional motive), it follows from theorem \ref{nilp} this cycle is nilpotent: there exists $N\in\NN$ such that
     \[ \Bigl({\displaystyle  \sum_{i\ {\rm odd}}} \Pi_{i}     -{\displaystyle \sum_{i\ {\rm odd}}}{\displaystyle \sum_j} \Gamma_2^j\circ \pi^{A_j}_{2i+1+2c_j}\circ \Gamma_1^j\circ \Pi_{2i+1}\Bigr)^{\circ N}=0\ \ \in\  A^n_{}(X\times X)\ .\]
          Developing this expression, we obtain
      \[     \bigl( {\displaystyle \sum_{i\ {\rm odd}}}\Pi_{i}\bigr)^{\circ N}=  Q_1+\cdots + Q_{N^\prime}
       \ \ \hbox{in}\ A^n(X\times X)\ ,\]
       where each $Q_s$ is a composition of correspondences in which some $\pi^{A_j}_{2i+1+2c_j}$ occurs at least once, i.e.
       \[ Q_s= (\hbox{something})\circ \pi^{A_j}_{\ell}   \circ (\hbox{something})\ \ \ \hbox{in}\ A^n(X\times X)\ \ \ ,\hbox{with\ $\ell$\ odd}.\] 
                This equality implies in particular that both sides act in the same way on $Z^r(X)$ for any $r$, i.e.
      \[ \Bigl( \bigl( {\displaystyle \sum_{i\ {\rm odd}}}\Pi_{i}\bigr)^{\circ N}\Bigr){}_\ast= ( {\displaystyle \sum_{s}} Q_s)_\ast  = (\hbox{something})_\ast
      (\pi^{A_j}_{\ell})_\ast(\hbox{something}){}_\ast\colon \ \  Z^r(X)\ \to\ Z^r(X)\ .\]      
      
      The right--hand--side of this equality is $0$, since 
         \[ (\pi^{A_j}_{\ell})_\ast \Bigl( {A^\ast_{num}(A_j)\over A^\ast_\otimes(A_j)}\Bigr)\]
    for $\ell$ odd (this follows from theorem \ref{skew}, combined with the fact that the $\pi^{A_j}_\ell$ project to odd gradeds of the Beauville filtration on $A^\ast(A_j)$ \cite{DM}, \cite{Sc}). As we have seen in equality (\ref{power}), the left--hand--side is the identity. We conclude that 
      \[ Z^r(X)=0\ .\]      
   \end{proof}

\section{Examples}

In this section, we aim to give some content to theorem \ref{main}, by providing examples of varieties satisfying the assumptions. For convenience, we will write $X^n_d$ for the Fermat hypersurface of dimension $n$ and degree $d$.

\begin{corollary}\label{ex} Let $X$ be one of the following:

\noindent
(1) a Fermat hypersurface $X^n_d$ with $n$ odd;

\noindent
(2) a product $Y_1\times\cdots\times Y_s\times X^n_d$, where the $Y_i$ are varieties with $A^\ast_{hom}(Y_i)=0$ (examples of such varieties can be found in \cite{BKL}, \cite{PW}, \cite{V8}), and $n$ is odd;

\noindent
(3) a product $Y_1\times\cdots\times Y_s\times Y$, where the $Y_i$ are as in (2), and $Y$ is a Calabi--Yau threefold with motive of abelian type (examples of such $Y$ are given in \cite[Section 2]{des} and in \cite{exCY});

\noindent
(4) a product $S\times X^n_{d}$ where $n$ is odd, and $S$ is a regular surface with motive of abelian type (e.g., $S$ can be $X^2_{d^\prime}$, or a double plane branched along $6$ lines in general position \cite{Par}, or a $K3$ surface with Picard number $\ge 19$, or any of the surfaces in \cite{GaP}, \cite{Bonf});

\noindent
(5) a product $Y\times C$, where $C$ is a curve and $Y=X^4_7/\mu_7$ is the fourfold studied in \cite[Proposition 2.17]{V4};

\noindent
(6) a product $Y\times S$, where $S$ is a surface with $A^2_{AJ}(S)=0$, and $Y=X^4_7/\mu_7$ is the fourfold of \cite[Proposition 2.17]{V4};

\noindent
{(7)} a product $S\times Y$, where $S$ is a regular surface with motive of abelian type, and $Y$ is a Calabi--Yau threefold with motive of abelian type; 


\noindent
{(8)} the Calabi--Yau $5$--fold obtained from a product of $5$ elliptic curves as in \cite[Corollary 2.3]{CH}.

Then 
  \[ A^r_\otimes(X)=A^r_{num}(X)\ \ \hbox{for\ all\ }r\ .\]
\end{corollary}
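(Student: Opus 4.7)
The strategy is to verify, case by case, the two hypotheses of theorem \ref{main}: that $X$ has motive of abelian type, and that $H^{2i}(X)=\wt{N}^{i-1}H^{2i}(X)$ for all $i\le n/2$.

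For hypothesis (i), motive of abelian type is preserved under products and direct summands. The required building blocks are: curves and abelian varieties (trivially); Fermat hypersurfaces, which are dominated by products of curves (Shioda); varieties with $A^\ast_{hom}=0$, whose motive is a sum of Tate twists (Bloch--Srinivas); and surfaces with $A^2_{AJ}=0$ (by Vial's work, cf.\ remark \ref{exfindim}). The remaining ingredients are: the Calabi--Yau threefolds of (3) and (7) are of abelian type by hypothesis; $Y=X^4_7/\mu_7$ is a direct summand of the Fermat motive and so of abelian type; and the fivefold of (8) is constructed in \cite{CH} as a resolved finite quotient of $E_1\times\cdots\times E_5$, inheriting abelian type from the product of elliptic curves.

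For hypothesis (ii) in cases (1)--(7), the central tool is the external product rule for niveau: if $\alpha\in\wt{N}^a H^p(X_1)$ and $\beta\in\wt{N}^b H^q(X_2)$, then $\alpha\times\beta\in\wt{N}^{a+b}H^{p+q}(X_1\times X_2)$, obtained by taking the external product of the correspondences witnessing $\alpha$ and $\beta$. Combined with K\"unneth, this reduces the check to identifying the niveau of the cohomology of each factor, for which I would collect the following inputs: (a) Fermat $X^n_d$ with $n$ odd has Tate even cohomology (in maximal niveau) and non--trivial odd cohomology only in the single middle degree $n$; (b) varieties with $A^\ast_{hom}=0$ have vanishing odd cohomology and maximally algebraic even cohomology; (c) a regular surface has $H^1=H^3=0$, and if moreover $A^2_{AJ}=0$ then $p_g=0$ by Mumford, so $H^2$ is algebraic and lies in $\wt{N}^1$; (d) a Calabi--Yau threefold has $H^1=0$, $H^2=\pic\subset\wt{N}^1$, and $H^4\subset\wt{N}^1$ automatically by dimension; (e) Vial has shown that $Y=X^4_7/\mu_7$ satisfies $H^{2j+1}(Y)=0$ and $H^4(Y)=\wt{N}^1H^4(Y)$. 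With these in place, each case (1)--(7) reduces to a parity analysis of the K\"unneth decomposition of $H^{2i}(X)$ for $i\le n/2$: summands pairing odd cohomology of one factor either vanish or pair only maximally algebraic even classes from (a)--(e), and the external--product rule then places each surviving summand in $\wt{N}^{i-1}H^{2i}(X)$.

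The main obstacle is case (8), where the fivefold is not itself a product and the K\"unneth trick does not apply directly. Here one must verify $H^4=\wt{N}^1H^4$ from the explicit construction in \cite{CH}: decompose $H^4$ of the resolved quotient into the $G$--invariant part of $H^4(E_1\times\cdots\times E_5)$ plus contributions from the exceptional divisor, and check that the group $G$ kills enough of the transcendental $H^{3,1}\oplus H^{1,3}$ on the product so that the surviving $G$--invariant classes are all algebraic (hence in $\wt{N}^1$), while the exceptional contributions are trivially $\wt{N}^1$. This is a finite explicit computation with the group action from \cite{CH}.
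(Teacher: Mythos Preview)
Your overall strategy matches the paper's: verify (\rom1) abelian type and (\rom2) the niveau condition case by case, via K\"unneth and the external--product rule for $\wt{N}^\ast$. Your handling of (1)--(3), (5), (6) and your outline for (8) are essentially what the paper does (for (8) the paper likewise splits into the $G$--invariant part of $H^4(E_1\times\cdots\times E_5)$, checked to be algebraic via \cite[Lemma 2.4]{CH}, plus blow--up contributions which lie in $\wt{N}^1$ because the centers are rational of dimension $\le 3$ and hence satisfy $B$).

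There is, however, a genuine gap in cases (4) and (7). Your input (c) yields $H^2(S)\subset\wt{N}^1H^2(S)$ only under the additional hypothesis $A^2_{AJ}(S)=0$ (via Mumford). But in (4) and (7) the surface $S$ is merely assumed to be a \emph{regular surface with motive of abelian type}; the examples explicitly include $K3$ surfaces with $\rho\ge 19$, which have $p_g=1$ and hence $H^2(S)\not\subset\wt{N}^1H^2(S)$. So your concluding claim that all surviving K\"unneth summands ``pair only maximally algebraic even classes'' fails for the summand $H^2(S)\otimes H^{2i-2}$.

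The fix --- and this is precisely the paper's argument --- is that no niveau is needed on the surface side. One has trivially $H^2(S)=\wt{N}^0H^2(S)$ (take $Z=S$, $\Gamma=\Delta_S$ in the definition), while the other factor $H^{2i-2}(X^n_d)$ (resp.\ $H^2(Y)$ for the Calabi--Yau threefold) is algebraic, hence lies in $\wt{N}^{i-1}$ (resp.\ $\wt{N}^1$). Your external--product rule then gives
\[ H^2(S)\otimes H^{2i-2}(X^n_d)\ \subset\ \wt{N}^{0}\cdot\wt{N}^{i-1}\ \subset\ \wt{N}^{i-1}H^{2i}(S\times X^n_d)\ ,\]
and similarly $H^2(S)\otimes H^2(Y)\subset\wt{N}^1H^4(S\times Y)$ in case (7). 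In short, the required coniveau is supplied entirely by the Fermat (resp.\ Calabi--Yau) factor, not by the surface.
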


\begin{proof} Clearly, all these examples have motive of abelian type: for case (1), this follows from Shioda's inductive structure \cite{Shi}; for case (2) this follows from \cite[Theorem 5]{V2} (or, independently, from \cite{Kim2}); case (5) follows from \cite[Proposition 2.17]{V4}; the surfaces in case (6) follow from \cite[Theorem 4]{V2}. It remains to check hypothesis (\rom2) of theorem \ref{main} is verified. For cases (1), (2), (3) this is clear since in these cases the even--degree cohomology is algebraic, and so
  \[ H^{2i}(X)=N^i H^{2i}(X)=\wt{N}^i H^{2i}(X)\subset \wt{N}^{i-1} H^{2i}(X)\ .\]
  
  In case (4), we have
   \[  H^{2i}(X)=\bigoplus_{k+\ell=2i} H^k(S)\otimes H^{\ell}(X^n_{d^\prime})\ .\]
 Any direct summand with $k\not=2$ consists of algebraic classes. For $k=2$, we have
   \[ H^2(X^2_d)\otimes  H^{2i-2}(X^n_{d^\prime})\ \subset\ H^2(X^2_d)\otimes \wt{N}^{i-1} H^{2i-2}(X^n_{d^\prime})\ \subset\   \wt{N}^{i-1} H^{2i}(X^2_d\times X^{n}_{d^\prime})\ .\]  
 
 In case (5), we have $H^4(Y)=\wt{N}^1 H^4(Y)$ \cite[Proposition 2.17]{V4}. It follows that
   \[ H^4(Y\times C)= H^4(Y)\otimes H^0(C)\oplus H^2(Y)\otimes H^2(C)\ \subset\ \wt{N}^1 H^4(Y\times C)\ .\]
   
   In case (6), we have 
     \[ \begin{split} H^4(X)&= H^4(Y)\otimes H^0(S)\oplus H^2(Y)\otimes H^2(S)\oplus H^0(Y)\otimes H^4(S)\\ 
                               &\subset\ \wt{N}^1 H^4(Y)\otimes H^0(S)\oplus \wt{N}^1 H^2(Y)\otimes \wt{N}^1 H^2(S)\oplus H^0(Y)\otimes \wt{N}^2 H^4(S)\\
                               &\subset \wt{N}^1 H^4(X)\ ,\\
                               \end{split}\]
                   and likewise
 \[ \begin{split} H^6(X)&= H^6(Y)\otimes H^0(S)\oplus H^4(Y)\otimes H^2(S)\oplus H^2(Y)\otimes H^4(S)  \\ 
                               &\subset\ \wt{N}^2 H^6(Y)\otimes H^0(S)\oplus \wt{N}^1 H^4(Y)\otimes \wt{N}^1 H^2(S)\oplus \wt{N}^1 H^2(Y)\otimes \wt{N}^2 H^4(S)\\
                               &\subset \wt{N}^2 H^6(X)\ .\\
                               \end{split}\]

 Cases (7) is similar to case (4).
 
As to case (8): let $E_1,\ldots, E_5$ be elliptic curves, and let $X$ be a Calabi--Yau $5$--fold obtained as a smooth model of the quotient
  \[   (E_1\times\cdots\times E_5)/\ZZ_2^4\]
  as in \cite[Corollary 2.3]{CH}. It is readily checked (using the argument of \cite[Lemma 2.4]{CH}) that
  \[  H^4(E_1\times\cdots\times E_5)^{\ZZ_2^4}\subset N^2 H^4(E_1\times\cdots\times E_5)\ . \]
Next, the inductive construction of \cite[Proposition 2.1]{CH}  shows $X$ is of the form $Z/\ZZ_2^4$, where $Z$ is obtained from $E_1\times\cdots\times E_5$ by blowing up some rational subvarieties. Since rational varieties of dimension $\le 3$ verify the Lefschetz standard conjecture, this implies
  \[  H^i(Z)\ \subset\ \ima \bigl( H^i(E_1\times\cdots\times E_5)\ \to\ H^i(Z)\bigr) \ \cup \ \wt{N}^1 H^i(Z)\ \ \ \hbox{for\ all\ }i\ .\]
  In particular, it follows that
  \[ H^4(X)=H^4(Z)^{\ZZ_2^4}\ \subset\   \ \wt{N}^1 H^4(X)\ .\]
     \end{proof}

\vskip1cm

\begin{nonumberingt} The ideas developed in this note grew into being during the Strasbourg 2014---2015 groupe de travail based on the monograph \cite{Vo}. Thanks to all the participants of this groupe de travail for the pleasant and stimulating atmosphere. 
Thanks to Charles Vial and an anonymous referee for helpful comments.
Many thanks to Yasuyo, Kai and Len for not disturbing when the door to my office is closed.
\end{nonumberingt}

\vskip1cm

\end{document}